\newtheorem{thm}{Theorem}[section]
\newtheorem*{conj1}{Extended Lindel\"{o}f Hypothesis}
\newtheorem*{conj2}{Gauss Circle Problem}
\newtheorem{lem}[thm]{Lemma}
\newtheorem*{thm*}{Theorem}
\begin{document}

\title{Visible lattice points and the Extended Lindel\"{o}f Hypothesis}
\author{Wataru Takeda}
\address{Department of Mathematics,Kyoto University, Kitashirakawa Oiwake-cho, Sakyo-ku, Kyoto 606-8502,
Japan}
\email{takeda-w@math.kyoto-u.ac.jp}
\subjclass[2010]{11P21,11N45,11R42,52C07}
\keywords{Dedekind zeta function; Extended Lindel\"{o}ff Hypothesis; Gauss circle problem; asymptotic behavior}

\begin{abstract}
We consider the number of visible lattice points under the assumption of the Extended Lindel\"{o}f Hypothesis. We get a relation between visible lattice points and the Extended Lindel\"{o}f Hypothesis. And we also get a relation between visible lattice points over $\mathbf{Q}(\sqrt{-1})$ and the Gauss Circle Problem.
\end{abstract}
\maketitle
\section{Introduction}
Let $K$ be a number field and let $\mathcal{O}_K$ be its ring of integers. We consider an $m$-tuple of ideals $(\mathfrak{a}_1, \mathfrak{a}_2,\ldots,\mathfrak{a}_m)$ of $\mathcal{O}_K$ as a lattice points in $Frac(\mathcal{O}_K)^m$.  When $\mathcal{O}_K=\mathbf{Z}$ they are ordinally lattice points. We say that a lattice point $(\mathfrak{a}_1, \mathfrak{a}_2,\ldots,\mathfrak{a}_m)$ is visible from the origin, if $\mathfrak{a}_1+\cdots+\mathfrak{a}_m=\mathcal{O}_K$. 

There are many results about the number of visible lattice point from 1800's. In the case $K=\mathbf{Q}$, D. N. Lehmer prove that the density of the set of visible lattice points in $\mathbf{Q}^m$ is $1/\zeta(m)$ in 1900 \cite{Le00}. And in general case, B. D. Sittinger proved the number of visible lattice points $(\mathfrak{a}_1, \mathfrak{a}_2,\ldots,\mathfrak{a}_m)$ in $K^m$ with $\mathfrak{Na_i}\le x$ for all $i=1,\ldots,m$ is \[\frac{c^m}{\zeta_K(m)}x^m+\text{(Error term)},\]
where $\zeta_K$ is the Dedekind zeta function over $K$ and $c$ is a positive constant depending only on $K$ \cite{St10}.

Let $V_m(x,K)$ denote the number of visible lattice points $(\mathfrak{a}_1, \mathfrak{a}_2,\ldots,\mathfrak{a}_m)$ with $\mathfrak{Na_i}\le x$ for all $i=1,\ldots,m$. When $K=\mathbf{Q}$, $V_m(x,\mathbf{Q})$ means the number of visible lattice points in $(0,x]^m$. And we let $E_m(x,K)$ denote its error term, i.e. $E_m(x,K)=V_m(x,K)-(cx)^m/\zeta_K(m)$. 

In the case $K=\mathbf{Q}$, we proved that the exact order of $E_m(x,\mathbf{Q})$ is $x^{m-1}$ for $m\ge3$ \cite{Ta16}. But we do not know about the exact order of $E_m(x,K)$. In this paper, we consider better upper order of $E_m(x,K)$ under the situation that the Extended Lindel\"{o}f Hypothesis is true. The statement of our main theorem is the following.
\begin{thm*}
If we assume the Extended Lindel\"{o}f Hypothesis, we get \[E_m(x,K)=O(x^{m-1/2+\varepsilon})\]
for all algebraic number field $K$ and for all $\varepsilon>0$.
\end{thm*}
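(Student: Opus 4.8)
The plan is to reduce the count of visible lattice points to the classical problem of counting ideals of bounded norm, and then to feed in the strong bound on the error term of that problem supplied by the Extended Lindel\"of Hypothesis.

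The first step is an ideal-theoretic M\"obius inversion. Since $\mathfrak d\mid(\mathfrak a_1+\cdots+\mathfrak a_m)$ precisely when $\mathfrak d\mid\mathfrak a_i$ for every $i$, and each such $\mathfrak a_i$ is of the form $\mathfrak d\mathfrak b_i$ with $N\mathfrak a_i=N\mathfrak d\cdot N\mathfrak b_i$, we get
\[
V_m(x,K)=\sum_{N\mathfrak d\le x}\mu_K(\mathfrak d)\,I_K\!\left(\frac{x}{N\mathfrak d}\right)^{\!m},
\qquad I_K(t):=\#\{\mathfrak b\subseteq\mathcal O_K:\ N\mathfrak b\le t\},
\]
where $\mu_K$ is the M\"obius function on ideals of $\mathcal O_K$. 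Writing $I_K(t)=ct+\Delta_K(t)$ with $c=\operatorname{Res}_{s=1}\zeta_K(s)$, the same constant as in the statement, the whole problem is reduced to controlling the remainder $\Delta_K$.

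The second step, which I expect to be the real work, is the lemma that the Extended Lindel\"of Hypothesis implies $\Delta_K(t)=O(t^{1/2+\varepsilon})$ for every $\varepsilon>0$. Since $I_K(t)=\sum_{n\le t}a_K(n)$ with $\sum_{n\ge1}a_K(n)n^{-s}=\zeta_K(s)$, one applies a truncated Perron formula at height $T$ and shifts the contour from $\operatorname{Re}s=1+1/\log t$ to $\operatorname{Re}s=\tfrac12+\varepsilon$, picking up the residue $ct$ at the simple pole $s=1$. On the shifted line the Lindel\"of bound $\zeta_K(\tfrac12+\varepsilon+iu)\ll_\varepsilon(|u|+1)^\varepsilon$ gives a contribution $\ll t^{1/2+\varepsilon}T^\varepsilon$; the two horizontal segments, handled by a Phragm\'en--Lindel\"of interpolation between $\operatorname{Re}s=\tfrac12$ (Lindel\"of) and $\operatorname{Re}s=1+\delta$ (absolute convergence), contribute $\ll t^{1+\varepsilon}/T^{1-\varepsilon}$; and the truncation error of the Perron formula is $\ll t^{1+\varepsilon}/T$, using only $a_K(n)\ll_\varepsilon n^\varepsilon$. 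Choosing $T=t^{1/2}$ balances these three quantities, and the usual nuisance when $t$ is near an integer is absorbed into $t^\varepsilon\ll t^{1/2+\varepsilon}$. This proves the lemma.

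The last step is to substitute and expand. Writing $\Delta:=\Delta_K(x/N\mathfrak d)$ and using the binomial theorem,
\[
V_m(x,K)=\sum_{N\mathfrak d\le x}\mu_K(\mathfrak d)\sum_{j=0}^{m}\binom{m}{j}\left(\frac{cx}{N\mathfrak d}\right)^{\!m-j}\!\Delta^{\,j}.
\]
For $m\ge2$ the $j=0$ term equals $(cx)^m\sum_{N\mathfrak d\le x}\mu_K(\mathfrak d)(N\mathfrak d)^{-m}=(cx)^m\zeta_K(m)^{-1}+O(x)$, because the tail of $\sum_{\mathfrak d}\mu_K(\mathfrak d)(N\mathfrak d)^{-m}$ is $O(x^{1-m})$; this is exactly the main term subtracted off in the definition of $E_m(x,K)$. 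For $1\le j\le m$, bounding $|\mu_K|\le1$ and $|\Delta|^{\,j}\ll 1+(x/N\mathfrak d)^{j/2+\varepsilon}$ by the lemma, the $j$-th term is
\[
\ll x^{m-j}\!\!\sum_{N\mathfrak d\le x}(N\mathfrak d)^{-(m-j)}+x^{m-j/2+\varepsilon}\!\!\sum_{N\mathfrak d\le x}(N\mathfrak d)^{-(m-j/2+\varepsilon)}\ \ll\ x^{m-1+\varepsilon}+x^{m-j/2+\varepsilon},
\]
the last series converging for all $j\le m$ when $m\ge2$. The dominant contribution is $j=1$, of size $O(x^{m-1/2+\varepsilon})$, and every $j\ge2$ term is $O(x^{m-1+\varepsilon})$; together with the $O(x)$ from $j=0$ this gives $E_m(x,K)=O(x^{m-1/2+\varepsilon})$. (The case $m=1$ is trivial, since $V_1(x,K)=1$.) The one genuinely substantial point is the Lindel\"of-to-$\Delta_K$ lemma; granting it, the remainder is elementary bookkeeping.
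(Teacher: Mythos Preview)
Your proof is correct and follows essentially the same route as the paper: M\"obius inversion to express $V_m(x,K)=\sum_{N\mathfrak d\le x}\mu_K(\mathfrak d)\,I_K(x/N\mathfrak d)^m$, a Perron--contour-shift lemma showing that ELH gives $I_K(t)=ct+O(t^{1/2+\varepsilon})$, and a binomial expansion to isolate the main term and bound the rest. The only cosmetic differences are your parameter choices $T=t^{1/2}$ and starting line $\Re s=1+1/\log t$ (the paper takes $T=x^3$ and $\Re s=2$) and your appeal to Phragm\'en--Lindel\"of on the horizontal segments, which is unnecessary since ELH as stated in the paper already bounds $\zeta_K(\sigma+it)$ for all $\sigma\ge\tfrac12$.
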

As a result, we can think of considering the number of visible lattice points as considering the Extended Lindel\"{o}f Hypothesis. And we show that the number of visible lattice points in $\mathbf{Q}(\sqrt{-1})^m$ are associated with the Gauss Circle Problem.

%導入　予想とか
\section{The Extended Lindel\"{o}f Hypothesis}
The Dedekind zeta function $\zeta_K$ over $K$ is considered as a generalization of the Riemann zeta function and $\zeta_K$ is defined as
\[\zeta_K(s)=\sum_{\mathfrak{a}}\frac1{\mathfrak{N}\mathfrak{a}^s},\]with the sum taken over all nonzero ideals of $\mathcal{O}_K$.

Riemann proposed that all non-trivial zeros of the Riemann zeta function is on the line $\Re(s)=1/2$ in his paper \cite{Ri59}. The Extended Riemann Hypothesis over algebraic number field is known as a generalization of the Riemann hypothesis.  The statement of the Extended Riemann Hypothesis is "for all algebraic number field $K$ all non-trivial zeros of the Dedekind zeta function is on the line $\Re(s)=1/2$".

One of other important hypotheses in analytic number theory is the Lindel\"{o}f Hypothesis. As well as the Riemann Hypothesis, this hypothesis can be generalized over algebraic number fields. 
The Extended Lindel\"{o}f Hypothesis can be written as follows. 
\begin{conj1}For $\sigma\ge1/2$
\[\zeta_K(\sigma+it)=O(t^\varepsilon)\]for every $\varepsilon>0$.
\end{conj1}
In 1918, R. Backlund proved that the Lindel\"{o}f Hypothesis is equivalent to the statement that the number of zeros of the Riemann zeta function $\zeta(s)$ in the strip $\{s=\sigma+it~|~1/2<\sigma,\ T\le t\le T+1\}$ is $o(\log T)$ as $T$ tends to $\infty$ \cite{Ba18}. On the other hand, the Riemann Hypothesis stated that all non-trivial zeros of the Riemann zeta function $\zeta(s)$ is on the line $\Re(s)=1/2$, so this hypothesis implies the Lindel\"{o}f Hypothesis.

As well as this result, the Extended Lindel\"{o}f Hypothesis can be followed from the Extended Riemann Hypothesis. Thus the following theorem holds.
\begin{thm}
If the Extended Riemann Hypothesis holds, then for all $\sigma\ge1/2$
\[\zeta_K(\sigma+it)=O(t^\varepsilon)\]for every $\varepsilon>0$.
\end{thm}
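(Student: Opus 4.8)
The plan is to carry the classical chain ``Riemann Hypothesis $\Rightarrow$ Lindel\"{o}f Hypothesis'' over to the Dedekind zeta function $\zeta_K$. The facts about $\zeta_K$ that make this possible are standard: $\zeta_K$ has the Euler product $\prod_{\mathfrak p}(1-\mathfrak N\mathfrak p^{-s})^{-1}$ on $\Re s>1$, it continues meromorphically to $\mathbf{C}$ with a single simple pole at $s=1$, and it satisfies a functional equation relating $\zeta_K(s)$ and $\zeta_K(1-s)$ through Gamma factors and the discriminant of $K$. The degree $[K:\mathbf{Q}]$ will enter the estimates only through the implied constants, never through the shape of the bounds, so everything that is known for $\zeta$ goes through with minor changes.

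First I would collect the unconditional input. From the Euler product $\zeta_K(\sigma+it)=O(1)$ for $\sigma\ge 1+\eta$, and $\zeta_K(1+it)=O(\log t)$ by partial summation; together with Stirling applied to the Gamma factors in the functional equation this yields a polynomial ``convexity'' bound $\zeta_K(\sigma+it)=O(t^{B})$ uniformly on $1/2\le\sigma\le 2$ for some $B=B(K)$, where to stay clear of the pole one argues with $(s-1)\zeta_K(s)$ and takes $t$ large. For $\sigma>2$ the desired estimate is already contained in the Euler product, so only the range $1/2\le\sigma\le 2$ remains.

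Next I would use the Extended Riemann Hypothesis, which asserts that $\zeta_K$ has no zero in the half-plane $\Re s>1/2$. The cleanest route is via Backlund's criterion quoted above: one first checks that his argument generalizes, so that the Extended Lindel\"{o}f Hypothesis is equivalent to the statement that, for each $\sigma>1/2$, the number of zeros $\rho$ of $\zeta_K$ with $\Re\rho>\sigma$ and $T\le\Im\rho\le T+1$ is $o(\log T)$ as $T\to\infty$; under the Extended Riemann Hypothesis this number is identically $0$, and the theorem follows at once. A more self-contained route is to note that, under the Extended Riemann Hypothesis, $\log\zeta_K(s)$ is holomorphic on $\Re s>1/2$ away from $s=1$, and to combine the $O(1)$ bound for $\log\zeta_K$ near $\sigma=2$ with the polynomial bound $\log|\zeta_K(\sigma+it)|=O(\log t)$ of the previous step by means of the Borel--Carath\'eodory and Hadamard three-circles theorems; this gives $\log|\zeta_K(\sigma+it)|=o(\log t)$ uniformly for $\sigma\ge 1/2+1/\log\log t$, and the thin remaining strip $1/2\le\sigma\le 1/2+1/\log\log t$ is then handled by a Phragm\'en--Lindel\"{o}f interpolation against $\sigma=1/2-1/\log\log t$, where the functional equation supplies a bound of comparable quality. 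Either way $\zeta_K(\sigma+it)=O(t^\varepsilon)$ for all $\sigma\ge 1/2$.

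I expect the main obstacle to lie in the self-contained route: the three-circles estimate degrades precisely because the target lies on (or against) the outer circle, so it must be bootstrapped against the zero-free information, exactly as in Titchmarsh's treatment of $\zeta$. For the Backlund route the only genuine task is to verify that his zero-counting argument---Jensen's formula for $\zeta_K$ on suitable disks, together with the functional equation---carries over with $[K:\mathbf{Q}]$-dependent constants, which it does without essential difficulty.
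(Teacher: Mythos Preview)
The paper does not actually supply a proof of this theorem: it simply recalls Backlund's equivalence for $\zeta$ and then asserts, in one sentence, that ``as well as this result, the Extended Lindel\"{o}f Hypothesis can be followed from the Extended Riemann Hypothesis.'' In other words, the paper's entire argument is your ``Backlund route''---the claim that Backlund's zero-counting criterion generalizes to $\zeta_K$, together with the trivial observation that under the Extended Riemann Hypothesis the relevant zero count is identically zero. On that point your plan and the paper agree.

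Where you go beyond the paper is in sketching the self-contained Borel--Carath\'eodory/three-circles argument and in noting what must actually be checked for Backlund's criterion to transfer (Jensen-type zero counts, the functional equation, convexity bounds with $[K:\mathbf{Q}]$-dependent constants). That extra material is correct in outline and is genuinely more informative than the paper, which offers no details at all. If anything, your caveat about the three-circles degradation near the boundary and the need for a Phragm\'en--Lindel\"{o}f patch on a strip of width $1/\log\log t$ is exactly the standard Titchmarsh maneuver, so there is no real obstacle there either.
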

From 1900's many results were shown under the situation that the Riemann Hypothesis is true. We assumed the Extended Lindel\"{o}f Hypothesis in this paper, hence we can get same results with assuming the Extended Riemann Hypothesis. We consider a relation between these hypotheses and visible lattice points from the origin in following sections.

%Main Theorem の証明準備
\section{Preparation for proof of our main theorem}
In this section, we prepare for showing the main theorem. We consider the number of ideals of $\mathcal{O}_K$ with their ideal norm is less than or equal to $x$. First we need following lemma about complex analysis. 

\begin{lem}
\label{ap}
We have 
\[\frac1{2\pi i}\int_{2-iT}^{2+iT}\frac{x^s}s\ ds=\left\{
\begin{array}{ll}
\displaystyle{O\left(\frac{x^2}{T\log x^{-1}}\right)}& \text{ if } 0<x<1,\\
\displaystyle{\frac12+O\left(\frac1{T}\right)} & \text{ if } x=1,\\
1+\displaystyle{O\left(\frac{x^2}{T\log x}\right)}& \text{ if } x>1. 
\end{array}
\right. \]
\end{lem}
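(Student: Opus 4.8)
The plan is to establish the Perron-type formula of Lemma~\ref{ap} by a contour-integration argument, estimating $\frac1{2\pi i}\int_{2-iT}^{2+iT}\frac{x^s}{s}\,ds$ via comparison with the full vertical line $\Re(s)=2$ and suitable horizontal/vertical detours depending on whether $x$ is smaller than, equal to, or greater than $1$.

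\begin{proof}[Proof sketch]
First consider the case $0<x<1$. Here I would close the contour to the \emph{right}, since $x^s$ decays as $\Re(s)\to+\infty$ when $x<1$. Let $U>2$ and consider the rectangle with vertices $2\pm iT$ and $U\pm iT$. The integrand $x^s/s$ is holomorphic in this region (the only pole is at $s=0$, which lies to the left), so by Cauchy's theorem the integral over the rectangle vanishes, and letting $U\to\infty$ the right edge contributes nothing. It remains to bound the two horizontal segments $\int_{2\pm iT}^{\infty\pm iT}\frac{x^s}{s}\,ds$: on each, $|s|\ge T$ and $|x^s|=x^{\sigma}$, so the segment is at most $\frac1T\int_2^{\infty}x^{\sigma}\,d\sigma=\frac1T\cdot\frac{x^2}{-\log x}=\frac{x^2}{T\log x^{-1}}$, giving the stated bound. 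For $x>1$ the argument is symmetric but one closes to the \emph{left}: take the rectangle with vertices $2\pm iT$ and $-U\pm iT$, pick up the residue of $x^s/s$ at $s=0$, which equals $1$, then let $U\to\infty$ and bound the horizontal tails $\int_{2\pm iT}^{-\infty\pm iT}$ by $\frac1T\int_{-\infty}^{2}x^{\sigma}\,d\sigma=\frac{x^2}{T\log x}$. The left edge again vanishes in the limit because $x^{\sigma}\to0$ as $\sigma\to-\infty$ when $x>1$.

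For the borderline case $x=1$, the integrand is simply $1/s$, and one computes $\frac1{2\pi i}\int_{2-iT}^{2+iT}\frac{ds}{s}$ directly. Parametrizing $s=2+it$ with $t\in[-T,T]$ gives $\frac1{2\pi}\int_{-T}^{T}\frac{dt}{2+it}$; since the imaginary part is odd it cancels and one is left with $\frac1{\pi}\int_0^T\frac{2\,dt}{4+t^2}=\frac1{\pi}\arctan(T/2)$, which is $\frac12-\frac1{\pi}\arctan(2/T)=\frac12+O(1/T)$. Alternatively, close the contour with a semicircular arc to the left of radius $\sqrt{4+T^2}$, pick up the residue $1$ at $s=0$ via a half-turn, and estimate the arc; either route yields $\tfrac12+O(1/T)$.

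I do not anticipate a serious obstacle here: this is a standard truncated Perron computation and every step is an elementary estimate. The only points requiring a little care are (i) justifying that the right edge (for $x<1$) and left edge (for $x>1$) of the rectangle genuinely vanish as $U\to\infty$ — which is immediate from the exponential decay of $x^{\sigma}$ in the relevant direction together with $|s|\to\infty$ — and (ii) keeping the constants implicit in $O(\cdot)$ genuinely absolute, i.e. independent of $x$ and $T$, which the bounds above achieve since no further dependence was introduced. The case split is dictated entirely by the direction in which $x^s$ decays, so organizing the proof around that trichotomy is the natural approach.
\end{proof}
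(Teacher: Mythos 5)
Your argument is correct and is exactly the standard contour-integration proof that the paper points to (it gives no proof of its own, only citing Lemma 4 of Section 11 of Apostol and noting it is done "by using contour integrals"): close to the right for $0<x<1$, close to the left and pick up the residue $1$ at $s=0$ for $x>1$, and compute directly for $x=1$. All your estimates, including the horizontal-tail bounds $\frac{1}{T}\int x^{\sigma}\,d\sigma$ and the $\arctan$ evaluation in the $x=1$ case, check out.
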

We can prove this lemma by using contour integrals. (For the details for the proof of this result, please see Lemma 4 in section 11 of \cite{Ap76}). We apply this lemma to consider the number of ideals of $\mathcal{O}_K$ with their ideal norm less than or equal to $x$. As shown below, we can compute it with smaller error term by assuming the Extended Lindel\"{o}f Hypothesis.
\begin{thm}
\label{number}
Let $j_K(x)$ be the number of ideals of $\mathcal{O}_K$ with their ideal norm less than or equal to $x$. Assume
the Extended Lindel\"{o}f Hypothesis. Then for every $\varepsilon>0$,%修正済み
we have\[j_K(x)=c x+O(x^{1/2+\varepsilon}),\]
where \[c=\frac{2^{r_1}(2\pi)^{r_2}hR}{w\sqrt{|d_K|}},\]
and:

$h$ is the class number of $K$,

$r_1$ and $r_2$ is the number of real and complex absolute values of $K$ respectively,

$R$ is the regulator of $K$,

$w$ is the number of roots of unity in $\mathcal{O}^*_K$,

$d_K$ is discriminant of $K$.
\end{thm}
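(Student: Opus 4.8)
The plan is to use Perron's formula (Lemma \ref{ap}) together with the analytic properties of the Dedekind zeta function $\zeta_K$ under the Extended Lindel\"{o}f Hypothesis. Since $j_K(x)=\sum_{\mathfrak{N}\mathfrak{a}\le x}1$ and $\zeta_K(s)=\sum_{\mathfrak a}(\mathfrak{N}\mathfrak a)^{-s}$, applying Lemma \ref{ap} termwise gives
\[
j_K(x)=\frac{1}{2\pi i}\int_{2-iT}^{2+iT}\zeta_K(s)\frac{x^s}{s}\,ds+(\text{error from Lemma \ref{ap}}),
\]
where the error is controlled by $\sum_{\mathfrak a}(\mathfrak{N}\mathfrak a)^{-2}\frac{x^2}{T|\log(x/\mathfrak{N}\mathfrak a)|}$; this is handled in the usual way (separating the ranges $\mathfrak{N}\mathfrak a\le x/2$, $x/2<\mathfrak{N}\mathfrak a<2x$, $\mathfrak{N}\mathfrak a\ge 2x$, using that the number of ideals of norm $n$ is $O(n^\varepsilon)$), producing a total contribution of size roughly $O(x^{1+\varepsilon}/T)$.

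Next I would shift the contour from $\Re(s)=2$ to $\Re(s)=1/2+\varepsilon$. The function $\zeta_K(s)x^s/s$ is meromorphic in the relevant region with a simple pole only at $s=1$, coming from the simple pole of $\zeta_K$ with residue equal to the constant
\[
c=\frac{2^{r_1}(2\pi)^{r_2}hR}{w\sqrt{|d_K|}}
\]
(the analytic class number formula), so the residue picked up is exactly $cx$. The contour shift thus yields
\[
j_K(x)=cx+\frac{1}{2\pi i}\left(\int_{1/2+\varepsilon-iT}^{1/2+\varepsilon+iT}+\int_{\text{horizontal pieces}}\right)\zeta_K(s)\frac{x^s}{s}\,ds+O\!\left(\frac{x^{1+\varepsilon}}{T}\right).
\]
On the vertical segment $\Re(s)=1/2+\varepsilon$ the Extended Lindel\"{o}f Hypothesis gives $\zeta_K(1/2+\varepsilon+it)=O(|t|^\varepsilon)$, so this integral is bounded by $x^{1/2+\varepsilon}\int_{1}^{T}t^{\varepsilon-1}\,dt=O(x^{1/2+\varepsilon}T^\varepsilon)$. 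On the horizontal segments at height $\pm T$, with $\sigma$ ranging over $[1/2+\varepsilon,2]$, one uses a convexity/Lindel\"{o}f-type bound $\zeta_K(\sigma\pm iT)=O(T^\varepsilon)$ together with $|x^s/s|\le x^\sigma/T$ to get a contribution $O(x^2 T^{\varepsilon-1})$ (the integral over $\sigma$ being dominated by the $\sigma=2$ endpoint).

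Collecting the estimates, $j_K(x)=cx+O(x^{1/2+\varepsilon}T^\varepsilon)+O(x^{1+\varepsilon}/T)+O(x^2T^{\varepsilon-1})$; choosing $T=x$ balances the error terms and yields $j_K(x)=cx+O(x^{1/2+\varepsilon})$ after relabelling $\varepsilon$. The main obstacle is justifying the contour shift cleanly: one must ensure the horizontal integrals genuinely go to a negligible size, which requires the Lindel\"{o}f-type growth bound throughout the strip $1/2\le\sigma\le 2$ (not merely on the critical line), and one must be slightly careful that the pole at $s=0$ of $x^s/s$ is not crossed — it is not, since we stop at $\Re(s)=1/2+\varepsilon>0$. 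A secondary technical point is the convergence issue when swapping the sum and integral in Perron's formula; this is standard but should be addressed by first working with a finite truncation or by absolute convergence on the line $\Re(s)=2$.
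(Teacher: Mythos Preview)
Your approach is essentially identical to the paper's: apply Perron's formula on the line $\Re(s)=2$, shift the contour left to (a neighborhood of) the critical line, pick up the residue $cx$ at $s=1$ via the analytic class number formula, and bound the remaining vertical and horizontal integrals using the Extended Lindel\"{o}f Hypothesis. One arithmetic slip: with your choice $T=x$ the horizontal segments give $O(x^{2}T^{\varepsilon-1})=O(x^{1+\varepsilon})$, which is too large; you need $T\ge x^{3/2}$ (the paper takes $T=x^{3}$, having obtained the cruder Perron error $O(x^{3}/T)$ by restricting to half-integer $x$) to force every term down to $O(x^{1/2+\varepsilon})$.
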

\begin{proof}
It suffices to show that $j_K(x)=c x+O(x^{1/2+\varepsilon})$ for all half integer $x=n+1/2$, where $n$ is a positive integer, because it holds for any real number $y\in[n,n+1)$ that $j_K(x)=j_K(y)$.
\begin{align*}
\intertext{We consider the integral\[\frac1{2\pi i}\int_{2-iT}^{2+iT}\zeta_K(s)\frac{x^s}s\ ds.\]
The series $\displaystyle{\zeta_K(s)=\sum_{\mathfrak{a}}\frac1{\mathfrak{N}\mathfrak{a}^s}}$ is absolutely and uniformly convergent on compact subsets on $\Re(s)>1$. Therefore we can interchange the order of summation and integral in above equation to obtain }
\frac1{2\pi i}\int_{2-iT}^{2+iT}\zeta_K(s)\frac{x^s}s\ ds&=\frac1{2\pi i}\int_{2-iT}^{2+iT}\sum_{\mathfrak{a}}\frac1{\mathfrak{N}\mathfrak{a}^s}\frac{x^s}s\ ds\\
&=\sum_{\mathfrak{a}}\frac1{2\pi i}\int_{2-iT}^{2+iT}\frac1{\mathfrak{N}\mathfrak{a}^s}\frac{x^s}s\ ds.\\
\intertext{Since $x$ was chosen to be a half integer, there are no terms with $\mathfrak{N}\mathfrak{a}=x$ in the above sum. From this, we get}
\frac1{2\pi i}\int_{2-iT}^{2+iT}\zeta_K(s)\frac{x^s}s\ ds&=\sum_{\mathfrak{N}\mathfrak{a}<x}\frac1{2\pi i}\int_{2-iT}^{2+iT}\frac{x^s}{\mathfrak{N}\mathfrak{a}^s}\frac1s\ ds+\sum_{\mathfrak{N}\mathfrak{a}>x}\frac1{2\pi i}\int_{2-iT}^{2+iT}\frac{x^s}{\mathfrak{N}\mathfrak{a}^s}\frac1s\ ds.\\
\intertext{By Lemma \ref{ap} for two sums,}
\frac1{2\pi i}\int_{2-iT}^{2+iT}\zeta_K(s)\frac{x^s}s\ ds&=\sum_{\mathfrak{N}\mathfrak{a}<x}\left(1+O\left(\frac{x^2}{T\mathfrak{N}\mathfrak{a}^2\log {x/\mathfrak{N}\mathfrak{a}}}\right)\right)+O\left(\sum_{\mathfrak{N}\mathfrak{a}>x}\frac{x^2}{T\mathfrak{N}\mathfrak{a}^2\log {\mathfrak{N}\mathfrak{a}}/x}\right)\\
&=j_K(x)+O\left(\frac{x^2}{T}\sum_{\mathfrak{N}\mathfrak{a}<x}\frac1{\mathfrak{N}\mathfrak{a}^2\log {x/\mathfrak{N}\mathfrak{a}}}+\frac{x^2}{T}\sum_{\mathfrak{N}\mathfrak{a}>x}\frac1{\mathfrak{N}\mathfrak{a}^2\log {\mathfrak{N}\mathfrak{a}/x}}\right).
\intertext{Now we estimate how fast the above sums grow. Since $x=n+1/2$}
\left|\sum_{\mathfrak{N}\mathfrak{a}<x}\frac1{\mathfrak{N}\mathfrak{a}^2\log x/\mathfrak{N}\mathfrak{a}}\right|&\le \left(\log {\frac x{x-\frac1{2}}}\right)^{-1}\sum_{\mathfrak{N}\mathfrak{a}<x}\frac1{\mathfrak{N}\mathfrak{a}^2}\\
&=O\left(x\sum_{\mathfrak{N}\mathfrak{a}<x}\frac1{\mathfrak{N}\mathfrak{a}^2}\right),
\intertext{because $\displaystyle{\log\left(1-\frac1{2x}\right)=O(x^{-1})}$ for $x>1$. The sum is bounded by $\zeta_K(2)$, so we estimate}
\left|\sum_{\mathfrak{N}\mathfrak{a}<x}\frac1{\mathfrak{N}\mathfrak{a}^2\log x/\mathfrak{N}\mathfrak{a}}\right|&=O(x).
\intertext{By similar estimate the sum over $\mathfrak{N}\mathfrak{a}>x$ is $O(x)$. Hence}
\frac1{2\pi i}\int_{2-iT}^{2+iT}\zeta_K(s)\frac{x^s}s\ ds&=j_K(x)+O\left(\frac{x^3}{T}\right).
\intertext{We can select large T so that the error term is sufficiently small.}
\intertext{Next we consider the integral  
\[\frac1{2\pi i}\int_{C}\zeta_K(s)\frac{x^s}s\ ds,\]
where $C$ is $C_4C_3C_2C_1$ in the following figure.}
\intertext{\setlength\unitlength{1truecm}
\begin{picture}(6,6)(0,0)
\put(-1,3){\vector(1,0){6}}
\put(0,0){\vector(0,1){6}}
\put(1,0.5){\vector(1,0){1.5}}
\put(2.5,0.5){\line(1,0){1.5}}
\put(4,0.5){\vector(0,1){3}}
\put(4,3.5){\line(0,1){2}}
\put(4,5.5){\vector(-1,0){1.5}}
\put(-0.3,3.1){O}
\put(5.1,3){$\Re(s)$}
\put(-0.3,6.1){$\Im(s)$}
\put(0,3){\circle*{0.1}}
\put(2.5,5.5){\line(-1,0){1.5}}
\put(1,5.5){\vector(0,-1){2.1}}
\put(1,3.5){\line(0,-1){3}}
\put(1,2.9){\line(0,1){0.2}}
\put(-0.1,5.5){\line(1,0){0.2}}
\put(0.2,5.5){$iT$}
\put(-0.1,0.5){\line(1,0){0.2}}
\put(0.2,0.5){$-iT$}
\put(0.7,3.2){$\frac12$}
\put(2,2.9){\line(0,1){0.2}}
\put(2,3.2){$1$}
\put(4.2,3.2){$2$}
\put(2.5,5.6){$C_2$}
\put(4.1,2.2){$C_1$}
\put(2.5,0.1){$C_4$}
\put(0.5,2.2){$C_3$}
\end{picture}}
\intertext{The integral on $C_1$ is the integral which we considered, so we will estimate other integrals.}
\intertext{First we consider the integral over $C_2$ as }
\left|\frac1{2\pi i}\int_{C_2}\zeta_K(s)\frac{x^s}s\ ds\right|&=\left|\frac1{2\pi i}\int_{2}^{1/2}\zeta_K(\sigma+iT)\frac{x^{\sigma+iT}}{\sigma+iT}\ d\sigma\right|\\
&\le\frac1{2\pi}\int^{2}_{1/2}|\zeta_K(\sigma+iT)|\frac{x^{\sigma}}{|\sigma+iT|}\ d\sigma.\\
\intertext{We assume the Extended Lindel\"{o}f Hypothesis, so we have $\zeta_K(\sigma+iT)=O(T^\varepsilon)$,}
\left|\frac1{2\pi i}\int_{C_2}\zeta_K(s)\frac{x^s}s\ ds\right|&=O\left(\int^{2}_{1/2}T^{\varepsilon}\frac {x^{\sigma}}{|\sigma+iT|}\ d\sigma\right)\\
&=O\left(\frac{x^2}{T^{1-\varepsilon}}\right).
\intertext{As well as before, if we select sufficiently large T, the error term will be very small.}
\intertext{Next we calculate the integral over $C_3$ as}
\left|\frac1{2\pi i}\int_{C_3}\zeta_K(s)\frac{x^s}s\ ds\right|&=\left|\frac1{2\pi i}\int_{T}^{-T}\zeta_K\left(\frac12+it\right)\frac{x^{1/2+it}}{\frac12+it}i\ dt\right|\\
&\le\frac1{2\pi}\int^{T}_{-T}\left|\zeta_K\left(\frac12+it\right)\right|\frac{x^{1/2}}{\left|\frac12+it\right|}\ dt.\\
\intertext{We assume the Extended Lindel\"{o}f Hypothesis, so we have $\zeta_K\left(\frac12+it\right)=O(t^\varepsilon)$,}
\left|\frac1{2\pi i}\int_{C_3}\zeta_K(s)\frac{x^s}s\ ds\right|&=O\left(\int^{T}_{-T}T^{\varepsilon}\frac {x^{1/2}}{\left|\frac12+it\right|}\ dt\right)\\
&=O(x^{1/2}T^{\varepsilon}\log T).
\intertext{If we select $T=x^3$, then the order of this error term is $O(x^{1/2+\varepsilon})$.}
\intertext{Finally we estimate the integral over $C_4$ as}
\left|\frac1{2\pi i}\int_{C_4}\zeta_K(s)\frac{x^s}s\ ds\right|&=\left|\frac1{2\pi i}\int^{2}_{1/2}\zeta_K(\sigma-iT)\frac{x^{\sigma-iT}}{\sigma-iT}\ d\sigma\right|\\
&\le\frac1{2\pi}\int^{2}_{1/2}|\zeta_K(\sigma-iT)|\frac{x^{\sigma}}{|\sigma-iT|}\ d\sigma.\\
\intertext{We assume the Extended Lindel\"{o}f Hypothesis, so we have $\zeta_K(\sigma-iT)=O(T^\varepsilon)$,}
\left|\frac1{2\pi i}\int_{C_4}\zeta_K(s)\frac{x^s}s\ ds\right|&=O\left(\int^{2}_{1/2}T^{\varepsilon}\frac {x^{\sigma}}{|\sigma-iT|}\ d\sigma\right)\\
&=O\left(\frac{x^2}{T^{1-\varepsilon}}\right).
\intertext{As well as before, if we select sufficiently large T, we can make the error term very small.}
\end{align*}
By the Cauchy residue theorem we get \[\frac1{2\pi i}\int_{C}\zeta_K(s)\frac{x^s}s\ ds=\rho x,\]where $\rho$ is the residue of $\zeta_K(s)$ at $s=1$. But it is known that $\rho=c$. (For the proof of this result, please see Theorem 5 in Section 8 of  \cite{La90}.) 
\begin{align*}
\intertext{By using all result above, we reach}
j_K(x)+O\left(\frac{x^3}{T}\right)&=cx+O\left(\frac{x^2}{T^{1-\varepsilon}}\right)+O(x^{1/2}T^{\varepsilon}\log T)+O\left(\frac{x^2}{T^{1-\varepsilon}}\right).\\
\intertext{When we select $T=x^3$, this becomes}
j_K(x)&=cx+O(x^{1/2+\varepsilon}).
\intertext{This proves the theorem.}
\end{align*}
\end{proof}
We estimated $j_K(x)$ with assuming the Extended Lindel\"{o}f Hypothesis in Theorem \ref{number}. In Lemma \ref{lem}, we consider the sum $\displaystyle{\sum_{\mathfrak{N}\mathfrak{a}\le x}\mu(\mathfrak{a})j_K\left(\frac x{\mathfrak{N}\mathfrak{a}}\right)^m}$, where $\mu(\mathfrak{a})$ is the M\"{o}bius function defined as 
\[\mu(\mathfrak{a})\overset{def}{=}\left\{
\begin{array}{ll}
1 &i\!f \ \mathfrak{a}=1,\\
(-1)^s &i\!f \ \mathfrak{a}=\mathfrak{p}_1\cdots \mathfrak{p}_s, \text{ where $\mathfrak{p}_1,\ldots, \mathfrak{p}_s$ are distinct prime ideals,}\\%修正済み
0  &i\!f \  \mathfrak{a}\subset\mathfrak{p}^2 \text{ for some prime ideal } \mathfrak{p}.
\end{array}
\right.\]
As we show in next section, this sum has a crucial role in computing visible lattice points.
\begin{lem}
\label{lem}
If we assume the Extended Lindel\"{o}f Hypothesis, we get
\[\sum_{\mathfrak{N}\mathfrak{a}\le x}\mu(\mathfrak{a})j_K\left(\frac x{\mathfrak{N}\mathfrak{a}}\right)^m=\frac{c^m}{\zeta_K(m)}x^m+O(x^{m-1/2+\varepsilon})\]
for all $\varepsilon>0$.
\end{lem}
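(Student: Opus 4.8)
The plan is to derive everything from the estimate for $j_K$ in Theorem~\ref{number}; the Extended Lindel\"{o}f Hypothesis enters only through that theorem, and the hypothesis $m\ge2$ is needed so that the auxiliary series below converge. Fix $\varepsilon>0$ and write $j_K(y)=cy+R(y)$ with $R(y)=O(y^{1/2+\varepsilon})$ for $y\ge1$; this is the relevant range, since every term of the sum has $\mathfrak{N}\mathfrak{a}\le x$, hence $x/\mathfrak{N}\mathfrak{a}\ge1$. Raising to the $m$-th power and expanding by the binomial theorem,
\[j_K(y)^m=\sum_{k=0}^{m}\binom{m}{k}(cy)^{m-k}R(y)^k=c^my^m+O\!\left(y^{m-1/2+\varepsilon}\right),\]
because the $k=1$ term is $O\!\left(y^{m-1}y^{1/2+\varepsilon}\right)=O\!\left(y^{m-1/2+\varepsilon}\right)$ while each term with $k\ge2$ is $O\!\left(y^{m-k/2+k\varepsilon}\right)$, which is of strictly smaller order once $\varepsilon$ is chosen small enough.

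Next I would substitute $y=x/\mathfrak{N}\mathfrak{a}$ and split off the main term:
\[\sum_{\mathfrak{N}\mathfrak{a}\le x}\mu(\mathfrak{a})\,j_K\!\left(\frac{x}{\mathfrak{N}\mathfrak{a}}\right)^m=c^mx^m\sum_{\mathfrak{N}\mathfrak{a}\le x}\frac{\mu(\mathfrak{a})}{\mathfrak{N}\mathfrak{a}^m}+O\!\left(x^{m-1/2+\varepsilon}\sum_{\mathfrak{N}\mathfrak{a}\le x}\frac1{\mathfrak{N}\mathfrak{a}^{m-1/2+\varepsilon}}\right).\]
For the $O$-term, since $m\ge2$ one has $m-1/2+\varepsilon>1$, so $\sum_{\mathfrak{N}\mathfrak{a}\le x}\mathfrak{N}\mathfrak{a}^{-(m-1/2+\varepsilon)}\le\zeta_K(m-1/2+\varepsilon)=O(1)$, and thus the $O$-term is $O(x^{m-1/2+\varepsilon})$.

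It remains to handle the main term. Using the Dirichlet series identity $\sum_{\mathfrak{a}}\mu(\mathfrak{a})\,\mathfrak{N}\mathfrak{a}^{-m}=\zeta_K(m)^{-1}$ (valid for $m\ge2$) and estimating the tail by partial summation together with $j_K(t)=O(t)$, which is a consequence of Theorem~\ref{number}, one obtains
\[\sum_{\mathfrak{N}\mathfrak{a}\le x}\frac{\mu(\mathfrak{a})}{\mathfrak{N}\mathfrak{a}^m}=\frac1{\zeta_K(m)}-\sum_{\mathfrak{N}\mathfrak{a}>x}\frac{\mu(\mathfrak{a})}{\mathfrak{N}\mathfrak{a}^m}=\frac1{\zeta_K(m)}+O\!\left(\sum_{\mathfrak{N}\mathfrak{a}>x}\frac1{\mathfrak{N}\mathfrak{a}^m}\right)=\frac1{\zeta_K(m)}+O\!\left(x^{1-m}\right).\]
Multiplying by $c^mx^m$ turns the main term into $\dfrac{c^m}{\zeta_K(m)}x^m+O(x)$, and since $x=O(x^{m-1/2+\varepsilon})$ for $m\ge2$, combining this with the $O$-term of the previous paragraph gives $\dfrac{c^m}{\zeta_K(m)}x^m+O(x^{m-1/2+\varepsilon})$, as claimed.

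I do not expect a genuine obstacle here: once Theorem~\ref{number} is in hand, everything reduces to elementary estimates for $\zeta_K$-type series. The only points that need care are the bookkeeping of $\varepsilon$ through the binomial expansion — all the exponents $k\varepsilon$ must be absorbed into a single final $\varepsilon$, which forces one to apply Theorem~\ref{number} with a suitably smaller $\varepsilon$ — and the standing assumption $m\ge2$, which is precisely what makes $m-1/2+\varepsilon>1$ and hence guarantees convergence of the auxiliary series. The improvement over the unconditional bound $x^{m-1}$ is inherited directly from the sharper estimate for $j_K$ proved under the Extended Lindel\"{o}f Hypothesis.
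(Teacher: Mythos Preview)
Your proof is correct and follows the same strategy as the paper's: expand $j_K(x/\mathfrak{N}\mathfrak{a})^m$ via Theorem~\ref{number} and the binomial theorem, isolate the main term $c^mx^m\sum\mu(\mathfrak{a})/\mathfrak{N}\mathfrak{a}^m$, and bound the tail of the M\"{o}bius sum and the remaining error sum separately. Your handling of the error sum is in fact a bit cleaner than the paper's: by noting that $m-1/2+\varepsilon>1$ you bound $\sum_{\mathfrak{N}\mathfrak{a}\le x}\mathfrak{N}\mathfrak{a}^{-(m-1/2+\varepsilon)}$ directly by $\zeta_K(m-1/2+\varepsilon)=O(1)$, whereas the paper integrates against the ideal-counting density and is led to a case split between $m\ge3$ and $m=2$ (the latter picking up an extra $\log x$ that must then be absorbed into the $\varepsilon$).
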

\begin{proof}We can show this lemma from last Theorem \ref{number}.
\begin{align*}
\intertext{Theorem \ref{number} and the binomial theorem lead to}
\sum_{\mathfrak{N}\mathfrak{a}\le x}\mu(\mathfrak{a})j_K\left(\frac x{\mathfrak{N}\mathfrak{a}}\right)^m&=\sum_{\mathfrak{N}\mathfrak{a}\le x}\mu(\mathfrak{a})\left(\frac {cx}{\mathfrak{N}\mathfrak{a}}+O\left(\left(\frac x{\mathfrak{N}\mathfrak{a}}\right)^{1/2+\varepsilon}\right)\right)^m\\
&=(cx)^m\sum_{\mathfrak{N}\mathfrak{a}\le x}\frac {\mu(\mathfrak{a})}{\mathfrak{N}\mathfrak{a}^m}+O\left(\sum_{\mathfrak{N}\mathfrak{a}\le x}\left(\frac {x}{\mathfrak{N}\mathfrak{a}}\right)^{m-1/2+\varepsilon}\right).\\
\intertext{By the fact $\displaystyle{\sum_{\mathfrak{a}}\frac {\mu(\mathfrak{a})}{\mathfrak{N}\mathfrak{a}^m}=\frac1{\zeta_K(m)}}$, we get}
\sum_{\mathfrak{N}\mathfrak{a}\le x}\mu(\mathfrak{a})j_K\left(\frac x{\mathfrak{N}\mathfrak{a}}\right)^m&=\frac{c^m}{\zeta_K(m)}x^m-(cx)^m\sum_{\mathfrak{N}\mathfrak{a}> x}\frac {\mu(\mathfrak{a})}{\mathfrak{N}\mathfrak{a}^m}+O\left(\sum_{\mathfrak{N}\mathfrak{a}\le x}\left(\frac {x}{\mathfrak{N}\mathfrak{a}}\right)^{m-1/2+\varepsilon}\right).\\
\intertext{Now we estimate how fast above first sum grows. From Theorem \ref{number} we can estimate $j_K(x)-j_K(x-1)=O(x^{1/2+\varepsilon})$, so we have}
\sum_{\mathfrak{N}\mathfrak{a}> x}\frac {\mu(\mathfrak{a})}{\mathfrak{N}\mathfrak{a}^m}&=O\left(x^m\int_x^{\infty}\frac{y^{1/2+\varepsilon}}{y^m}\ dy\right)\\
&=O(x^{3/2+\varepsilon}).\\
\intertext{Next we estimate how fast above second sum grows. As well as first sum, $j_K(x)-j_K(x-1)=O(x^{1/2+\varepsilon})$ holds, so we have}
\sum_{\mathfrak{N}\mathfrak{a}\le x}\left(\frac {x}{\mathfrak{N}\mathfrak{a}}\right)^{m-1/2+\varepsilon}&=O\left(x^{m-1/2+\varepsilon}\left(1+\int_1^x\frac{y^{1/2+\varepsilon}}{y^{m-1/2+\varepsilon}}\ dy\right)\right)\\
&=\left\{
\begin{array}{ll}
O(x^{m-1/2+\varepsilon}) &i\!f \ m\ge3,\\
O(x^{3/2+\varepsilon}\log x) &i\!f \ m=2.
\end{array}
\right.\\
\intertext{Hence we get final estimate.}
\sum_{\mathfrak{N}\mathfrak{a}\le x}\mu(\mathfrak{a})j_K\left(\frac x{\mathfrak{N}\mathfrak{a}}\right)^m&=\frac{c^m}{\zeta_K(m)}x^m+O(x^{m-1/2+\varepsilon}).
\intertext{This proves this lemma.}
\end{align*}
\end{proof}

%main theorem
\section{The proof of the main theorem}
When we assume the Extended Lindel\"{o}f Hypothesis, we can improve order of error term of some estimates in the last section. In this section we will show the main theorem by using results shown.
\begin{thm}
\label{main}
If we assume the Extended Lindel\"{o}f Hypothesis, we get \[E_m(x,K)=O(x^{m-1/2+\varepsilon})\]
for all algebraic number field $K$ and for all $\varepsilon>0$.
\end{thm}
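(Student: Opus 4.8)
The plan is to reduce the estimate for $E_m(x,K)$ to Lemma \ref{lem} by a M\"{o}bius inversion carried out on the monoid of nonzero ideals of $\mathcal{O}_K$. The point of departure is the trivial observation that the number of \emph{all} $m$-tuples $(\mathfrak{a}_1,\ldots,\mathfrak{a}_m)$ of nonzero ideals with $\mathfrak{N}\mathfrak{a}_i\le x$ for every $i$ equals $j_K(x)^m$, since each coordinate ranges independently over the $j_K(x)$ ideals of norm at most $x$.

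First I would sort every such $m$-tuple according to its greatest common divisor $\mathfrak{d}=\mathfrak{a}_1+\cdots+\mathfrak{a}_m$. In a Dedekind domain containment of ideals is divisibility, so $\mathfrak{d}\mid\mathfrak{a}_i$ and hence $\mathfrak{b}_i:=\mathfrak{a}_i\mathfrak{d}^{-1}$ is a nonzero integral ideal; moreover $\mathfrak{b}_1+\cdots+\mathfrak{b}_m=\mathfrak{d}^{-1}(\mathfrak{a}_1+\cdots+\mathfrak{a}_m)=\mathcal{O}_K$, so $(\mathfrak{b}_1,\ldots,\mathfrak{b}_m)$ is visible. Conversely, an ideal $\mathfrak{d}$ together with a visible tuple $(\mathfrak{b}_1,\ldots,\mathfrak{b}_m)$ recovers $(\mathfrak{a}_1,\ldots,\mathfrak{a}_m)=(\mathfrak{d}\mathfrak{b}_1,\ldots,\mathfrak{d}\mathfrak{b}_m)$, and this correspondence is a bijection. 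Since $\mathfrak{N}\mathfrak{a}_i=\mathfrak{N}\mathfrak{d}\cdot\mathfrak{N}\mathfrak{b}_i$, the constraint $\mathfrak{N}\mathfrak{a}_i\le x$ becomes $\mathfrak{N}\mathfrak{b}_i\le x/\mathfrak{N}\mathfrak{d}$, and counting both descriptions of the same set gives the identity
\[
j_K(x)^m=\sum_{\mathfrak{N}\mathfrak{d}\le x}V_m\!\left(\frac{x}{\mathfrak{N}\mathfrak{d}},K\right),
\]
a finite sum because $V_m(y,K)=0$ for $y<1$.

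Next I would invert this relation. The nonzero ideals of $\mathcal{O}_K$ form a free commutative monoid on the set of prime ideals, so $\sum_{\mathfrak{d}\mid\mathfrak{n}}\mu(\mathfrak{d})=1$ if $\mathfrak{n}=\mathcal{O}_K$ and $0$ otherwise, and the ordinary M\"{o}bius inversion argument (substitute, then interchange the two finite sums) yields
\[
V_m(x,K)=\sum_{\mathfrak{N}\mathfrak{a}\le x}\mu(\mathfrak{a})\,j_K\!\left(\frac{x}{\mathfrak{N}\mathfrak{a}}\right)^m.
\]
The right-hand side is precisely the quantity estimated in Lemma \ref{lem}; under the Extended Lindel\"{o}f Hypothesis it equals $\frac{c^m}{\zeta_K(m)}x^m+O(x^{m-1/2+\varepsilon})$, and subtracting the main term $(cx)^m/\zeta_K(m)$ gives $E_m(x,K)=O(x^{m-1/2+\varepsilon})$ for every $\varepsilon>0$, as claimed.

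I expect the only genuine obstacle to be the bookkeeping in the first step: verifying that $\mathfrak{d}=\mathfrak{a}_1+\cdots+\mathfrak{a}_m$ divides each $\mathfrak{a}_i$ in the sense that $\mathfrak{a}_i\mathfrak{d}^{-1}$ is integral, that dividing out $\mathfrak{d}$ really produces a visible tuple, and that the map $(\mathfrak{d},(\mathfrak{b}_i))\mapsto(\mathfrak{d}\mathfrak{b}_i)$ is a bijection respecting the norm bounds. Once this combinatorial identity is in hand, no further analytic input is needed: everything rests on Lemma \ref{lem}, which already absorbed the Extended Lindel\"{o}f Hypothesis through Theorem \ref{number}.
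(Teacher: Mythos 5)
Your proof is correct and follows essentially the same route as the paper: both reduce the theorem to the identity $V_m(x,K)=\sum_{\mathfrak{N}\mathfrak{a}\le x}\mu(\mathfrak{a})j_K\left(x/\mathfrak{N}\mathfrak{a}\right)^m$ and then invoke Lemma \ref{lem}. The only cosmetic difference is that you derive this identity by sorting tuples by their gcd and applying M\"{o}bius inversion on the ideal monoid, whereas the paper writes down the inclusion--exclusion over prime ideals directly and recognizes the M\"{o}bius function in the resulting alternating sum; the combinatorial content is the same.
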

\begin{proof}
From Theorem \ref{number}, we know that \[j_K(x)=c x+O(x^{1/2+\varepsilon}).\] We use this approximation formula to consider the error term $E_m(x,K)$ by following B. D. Sittinger way \cite{St10}.
\begin{align*}
\intertext{The Inclusion-–Exclusion Principle shows that}
V_m(x,K)&=j_K(x)^m-\sum_{\mathfrak{p}_1}j_K\left(\frac x{\mathfrak{N}\mathfrak{p}_1}\right)^m+\sum_{\mathfrak{p}_1,\mathfrak{p}_2}j_K\left(\frac x{\mathfrak{N}\mathfrak{p}_1\mathfrak{p}_2}\right)^m-\cdots\\
\intertext{where $\mathfrak{p}_1,\mathfrak{p}_2,\ldots$ denote distinct prime ideals with $\mathfrak{N}\mathfrak{p}_i\le x$. By the definition of M\"{o}bius function $\mu(\mathfrak{a})$ we can rewrite this sum simpler}
V_m(x,K)&=\sum_{\mathfrak{Na}\le x}\mu(\mathfrak{a})j_K\left(\frac x{\mathfrak{N}\mathfrak{a}}\right)^m.\\
\intertext{Using the Lemma \ref{lem}, we get}
V_m(x,K)&=\frac{c^m}{\zeta_K(m)}x^m+O(x^{m-1/2+\varepsilon}).
\intertext{This proves the main theorem.}
\end{align*}
\end{proof}

In 2010, B. D. Sittinger showed following theorem about visible lattice points over algebraic number field $K$ without assuming the Extended Lindel\"{o}f Hypothesis.
\begin{thm}
When $n=[K:\mathbf{Q}]$

\[V_m(x,K)=\frac{c^m}{\zeta_K(m)}x^m+\left\{
\begin{array}{ll}
O(x^{m-1/n}) & \text{ if } m\ge3,\\
O(x^{2-1/n}\log x)& \text{ if } m=2,
\end{array}
\right.\]
where $c$ is same constant as before.
\end{thm}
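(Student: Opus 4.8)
The plan is to run the same machinery as in Lemma~\ref{lem} and Theorem~\ref{main}, but feeding in the \emph{unconditional} estimate for $j_K(x)$ in place of the sharper one coming from the Extended Lindel\"{o}f Hypothesis. Recall that the ideals of $\mathcal{O}_K$ of norm at most $t$ split, according to ideal class, into finitely many families each of which is counted by the number of points of a fixed lattice inside a homogeneously dilated region of $\mathbf{R}^n$ with piecewise smooth boundary; the classical geometry-of-numbers estimate (see \cite{La90}) then gives
\[j_K(t)=ct+O\!\left(t^{1-1/n}\right),\qquad n=[K:\mathbf{Q}],\]
with the same constant $c$ as in Theorem~\ref{number}, the exponent $1-1/n$ measuring the lattice points in the boundary shell of a body of volume $\asymp t$. (For $K=\mathbf{Q}(\sqrt{-1})$ this is the trivial bound $O(t^{1/2})$ in the Gauss Circle Problem.) In particular $j_K(t)=O(t)$, and $j_K(t)-j_K(t-1)=O(t^{1-1/n})$.

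First I would reproduce the inclusion--exclusion identity from the proof of Theorem~\ref{main},
\[V_m(x,K)=\sum_{\mathfrak{N}\mathfrak{a}\le x}\mu(\mathfrak{a})\,j_K\!\left(\frac{x}{\mathfrak{N}\mathfrak{a}}\right)^{\!m},\]
then insert $j_K(x/\mathfrak{N}\mathfrak{a})=cx/\mathfrak{N}\mathfrak{a}+O((x/\mathfrak{N}\mathfrak{a})^{1-1/n})$ and expand by the binomial theorem, exactly as in Lemma~\ref{lem} but with $1/2+\varepsilon$ everywhere replaced by $1-1/n$. This produces the main term $(cx)^m\sum_{\mathfrak{N}\mathfrak{a}\le x}\mu(\mathfrak{a})/\mathfrak{N}\mathfrak{a}^m$ together with error terms, the largest of which is $\sum_{\mathfrak{N}\mathfrak{a}\le x}(x/\mathfrak{N}\mathfrak{a})^{m-1/n}$. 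Completing the M\"{o}bius sum to $\sum_{\mathfrak{a}}\mu(\mathfrak{a})/\mathfrak{N}\mathfrak{a}^m=1/\zeta_K(m)$ costs $(cx)^m\sum_{\mathfrak{N}\mathfrak{a}>x}\mathfrak{N}\mathfrak{a}^{-m}=O(x)$ by partial summation against $j_K(t)\ll t$, which is dominated by the claimed error for every $m\ge2$.

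It then remains to bound $\sum_{\mathfrak{N}\mathfrak{a}\le x}(x/\mathfrak{N}\mathfrak{a})^{m-1/n}=x^{m-1/n}\sum_{\mathfrak{N}\mathfrak{a}\le x}\mathfrak{N}\mathfrak{a}^{-(m-1/n)}$ by partial summation, again using $j_K(t)\ll t$. When $m\ge3$ the exponent $m-1/n$ exceeds $1$, so the series $\sum_{\mathfrak{a}}\mathfrak{N}\mathfrak{a}^{-(m-1/n)}=\zeta_K(m-1/n)$ converges and the sum is $O(x^{m-1/n})$. When $m=2$ the exponent $2-1/n$ is at least $1$: for $n\ge2$ the series still converges and the sum is $O(x^{2-1/n})$, while in the borderline case $n=1$ partial summation against $j_K(t)\ll t$ yields an extra factor $\log x$; in every case the sum is $O(x^{2-1/n}\log x)$. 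Collecting the contributions gives $E_m(x,K)=O(x^{m-1/n})$ for $m\ge3$ and $O(x^{2-1/n}\log x)$ for $m=2$, which is the theorem.

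The only point requiring more than bookkeeping is the unconditional count $j_K(t)=ct+O(t^{1-1/n})$; since this is classical and can be quoted from \cite{La90}, the substantive work is already contained in the previous section and only the exponents change. The mild subtlety is the $m=2$ case, where one must observe that the error sum sits exactly at the edge of convergence and carry the logarithm through; a sharper input such as Landau's $O(t^{(n-1)/(n+1)})$ would improve the exponent but is not needed for the statement as written.
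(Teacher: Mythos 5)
Your argument is correct: the paper itself quotes this result from Sittinger \cite{St10} without giving a proof, and your derivation --- the inclusion--exclusion identity $V_m(x,K)=\sum_{\mathfrak{N}\mathfrak{a}\le x}\mu(\mathfrak{a})j_K(x/\mathfrak{N}\mathfrak{a})^m$ combined with the classical unconditional count $j_K(t)=ct+O(t^{1-1/n})$ fed through the same binomial-expansion and tail-completion steps as Lemma~\ref{lem} --- is exactly the intended route (and essentially Sittinger's original one), with the main-term tail correctly bounded by $O(x)$ via partial summation against $j_K(t)\ll t$. Note that for $m=2$ and $n\ge2$ your bound $O(x^{2-1/n})$ without the logarithm is in fact slightly sharper than the stated error term, which of course still implies the theorem.
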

Considering Sittinger's result \cite{St10}, we can improve the order of $E_m(x,K)$ for all algebraic number field $K$ with $[K:\mathbf{Q}]\ge3$ under the situation that the Extended Lindel\"{o}f Hypothesis is true. 

%Appendixのセクション
\section{A relation with Gauss Circle Problem}
From Theorem \ref{main} in the last section, we can consider the relation between the number of visible lattice points from the origin over $K$ and the Extended Lindel\"{o}f Hypothesis. In this section, we consider about some relation between the number of visible lattice points from the origin over $\mathbf{Q}(\sqrt{-1})$ and the Gauss Circle Problem. It is well known that the Gauss Circle Problem can be written as follows.
\begin{conj2}
Let $N(r)$ be the cardinality of the set $\{(x,y)\in\mathbf{Z}^2~|~x^2+y^2\le r\}$.
Then \[N(r)=\pi r+O(r^{1/4+\varepsilon}),\]
for every $\varepsilon>0$. 
\end{conj2}
The original statement of the Gauss Circle Problem is not $r$ but $r^2$, but we change its statement slightly to consider a relation with the number of visible lattice points from the origin over $K=\mathbf{Q}(\sqrt{-1})$. Because $\mathcal{O}_K=\mathbf{Z}[\sqrt{-1}]$ is PID, all ideals of $\mathbf{Z}[\sqrt{-1}]$ can be written $(x+y\sqrt{-1})$ and their ideal norm is $x^2+y^2$. Considering the number of units in $\mathbf{Z}[\sqrt{-1}]$, $N(r)=4j_K(r)$ holds.
We can show the following theorem in a way similar to the proof of Theorem \ref{main}. As we remark later, we can assume that $N(r)-\pi r=O(r^{\alpha})$, where $1/4<\alpha<1/3$.
\begin{thm}
\label{circle}
When $K=\mathbf{Q}(\sqrt{-1})$ and $1/4<\alpha<1/3$,
\[N(r)=\pi r+O(r^{\alpha})\] is equivalent to for all $m\ge2$ \[E_m(x,K)=\left\{
\begin{array}{ll}
O(x^{m-1+\alpha}) & \text{ if } m\ge3,\\
O(x^{1+\alpha}\log x)& \text{ if } m=2.
\end{array}
\right.\]
\end{thm}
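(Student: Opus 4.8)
The statement is an equivalence whose two implications are of quite different character, so I would treat them separately. Throughout, recall that for $K=\mathbf{Q}(\sqrt{-1})$ the constant of Theorem \ref{number} equals $c=\pi/4$ and that $N(r)=4j_K(r)$, so the hypothesis $N(r)=\pi r+O(r^\alpha)$ is literally the same as $j_K(r)=cr+O(r^\alpha)$; the whole proof may thus be phrased in terms of $j_K$.

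\emph{From the circle problem to the error terms.} Assuming $j_K(x)=cx+O(x^\alpha)$, the plan is to rerun the proofs of Lemma \ref{lem} and Theorem \ref{main} with the exponent $1/2+\varepsilon$ replaced throughout by $\alpha$. Starting from $V_m(x,K)=\sum_{\mathfrak{N}\mathfrak{a}\le x}\mu(\mathfrak{a})j_K(x/\mathfrak{N}\mathfrak{a})^m$, one expands by the binomial theorem to get $j_K(x/\mathfrak{N}\mathfrak{a})^m=(cx/\mathfrak{N}\mathfrak{a})^m+O\big((x/\mathfrak{N}\mathfrak{a})^{m-1+\alpha}\big)$ — valid since $\alpha<1$ and $\mathfrak{N}\mathfrak{a}\le x$, so the first-order term of the expansion dominates — then isolates $\sum_{\mathfrak{a}}\mu(\mathfrak{a})/\mathfrak{N}\mathfrak{a}^m=1/\zeta_K(m)$ and estimates the tail $\sum_{\mathfrak{N}\mathfrak{a}>x}\mu(\mathfrak{a})/\mathfrak{N}\mathfrak{a}^m$ together with the error sum $\sum_{\mathfrak{N}\mathfrak{a}\le x}(x/\mathfrak{N}\mathfrak{a})^{m-1+\alpha}$ using $j_K(n)-j_K(n-1)=O(n^\alpha)$, which is immediate from the hypothesis. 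For $m\ge3$ both sums are governed by convergent series and one obtains $E_m(x,K)=O(x^{m-1+\alpha})$; for $m=2$ the relevant series is $\sum n^{-1}$, which produces the extra factor $\log x$. This part is essentially bookkeeping identical to Lemma \ref{lem}.

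\emph{From the error terms to the circle problem.} Here I would use the inverse of the above combinatorial identity. Partitioning all $m$-tuples $(\mathfrak{a}_1,\dots,\mathfrak{a}_m)$ with $\mathfrak{N}\mathfrak{a}_i\le x$ according to their gcd $\mathfrak{d}=\mathfrak{a}_1+\cdots+\mathfrak{a}_m$ (so $\mathfrak{a}_i=\mathfrak{d}\mathfrak{b}_i$ with $\mathfrak{b}_1+\cdots+\mathfrak{b}_m=\mathcal{O}_K$ and $\mathfrak{N}\mathfrak{b}_i\le x/\mathfrak{N}\mathfrak{d}$) yields
\[j_K(x)^m=\sum_{\mathfrak{N}\mathfrak{d}\le x}V_m\!\left(\frac{x}{\mathfrak{N}\mathfrak{d}},\,K\right).\]
I would take $m=3$. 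Inserting $V_3(y,K)=\tfrac{c^3}{\zeta_K(3)}y^3+E_3(y,K)$ with $E_3(y,K)=O(y^{2+\alpha})$, the main term gives $\tfrac{c^3}{\zeta_K(3)}x^3\sum_{\mathfrak{N}\mathfrak{d}\le x}\mathfrak{N}\mathfrak{d}^{-3}=c^3x^3+O(x)$ (the tail $\sum_{\mathfrak{N}\mathfrak{d}>x}\mathfrak{N}\mathfrak{d}^{-3}=O(x^{-2})$ needs only $j_K(t)=O(t)$), while the error sum is $O\big(x^{2+\alpha}\sum_{\mathfrak{N}\mathfrak{d}\le x}\mathfrak{N}\mathfrak{d}^{-(2+\alpha)}\big)=O(x^{2+\alpha})$ since $2+\alpha>1$. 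Hence $j_K(x)^3=c^3x^3+O(x^{2+\alpha})$. Writing $j_K(x)=cx+\delta(x)$ and feeding in any crude unconditional a priori bound, e.g. the classical $\delta(x)=O(x^{1/3})$ (the elementary $\delta(x)=O(x^{1/2})$ also suffices), the cross terms $3cx\,\delta(x)^2$ and $\delta(x)^3$ are $O(x^2)$, hence absorbed into $O(x^{2+\alpha})$ because $\alpha>0$, leaving $3c^2x^2\,\delta(x)=O(x^{2+\alpha})$, i.e. $\delta(x)=O(x^\alpha)$. Undoing $N(r)=4j_K(r)$ then completes the equivalence.

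\emph{Main obstacle.} The forward implication is mechanical. The substance lies in the reverse one: spotting the gcd-decomposition identity $j_K(x)^m=\sum_{\mathfrak{N}\mathfrak{d}\le x}V_m(x/\mathfrak{N}\mathfrak{d},K)$ as the correct inversion, and then recovering a bound on $j_K$ itself from a bound on $j_K^m$. This last step is exactly why one must take $m\ge3$: for $m=2$ the inversion only returns $j_K(x)=cx+O(x^\alpha\log x)$, with a spurious logarithm, whereas for $m\ge3$ the lower-order binomial terms are harmless and a weak a priori estimate bootstraps cleanly to $O(x^\alpha)$. The restriction $1/4<\alpha<1/3$ plays only a supporting role: $\alpha<1$ keeps the binomial error terms subordinate, $\alpha>0$ allows the a priori bound to be improved, and the precise endpoints merely delimit the range in which the circle-problem bound is neither already known ($\alpha\ge1/3$) nor false ($\alpha\le1/4$).
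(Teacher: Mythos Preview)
Your forward implication coincides with the paper's: both simply rerun Lemma~\ref{lem} and Theorem~\ref{main} with the exponent $1/2+\varepsilon$ replaced by $\alpha$.

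For the reverse implication you take a genuinely different route. The paper argues by contradiction: assuming $N(r)-\pi r\neq O(r^\alpha)$ while retaining the unconditional bound $N(r)-\pi r=O(r^{1/3})$, it asserts that tracing back through the computation of Lemma~\ref{lem} forces
\[
\sum_{\mathfrak{N}\mathfrak{a}\le x}\mu(\mathfrak{a})\,j_K\!\left(\frac{x}{\mathfrak{N}\mathfrak{a}}\right)^m-\frac{c^m}{\zeta_K(m)}x^m\neq O(x^{m-1+\alpha}),
\]
contradicting the assumed bound on $E_m$. You instead invert the basic identity via the gcd decomposition $j_K(x)^m=\sum_{\mathfrak{N}\mathfrak{d}\le x}V_m(x/\mathfrak{N}\mathfrak{d},K)$, specialise to $m=3$, and recover $\delta(x)=j_K(x)-cx$ from $j_K(x)^3-c^3x^3$ by bootstrapping off a crude a~priori bound on $\delta$. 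Your argument is correct and is the more transparent of the two: it makes explicit why one should choose $m\ge 3$ (so that the error sum $\sum_{\mathfrak{d}}(\mathfrak{N}\mathfrak{d})^{-(m-1+\alpha)}$ converges and no spurious logarithm appears) and exactly where the unconditional input $\delta(x)=O(x^{1/3})$, or even $O(x^{1/2})$, is consumed. The paper's contrapositive, by contrast, leaves implicit the step of isolating the leading contribution $\delta(x)$ (from $\mathfrak{a}=\mathcal{O}_K$) from the remainder of the M\"obius sum and controlling that remainder; your direct inversion sidesteps this entirely.
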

\begin{proof}
If we assume $N(r)=\pi r+O(r^{\alpha})$ we can get statement about $E_m(x,K)$ in this theorem  by same argument of Theorem \ref{main} and the proof of Lemma \ref{lem} with $1/2+\varepsilon$ replaced by $\alpha$. 

Conversely, if $E_m(x,K)$ satisfies the estimate in this theorem and $N(r)-\pi r\not=O(x^{\alpha})$ then we lead a contradiction as follows. By using same argument of the proof of Lemma \ref{lem}, we get \[\sum_{\mathfrak{N}\mathfrak{a}\le x}\mu(\mathfrak{a})j_K\left(\frac x{\mathfrak{N}\mathfrak{a}}\right)^m-\frac{c^m}{\zeta_K(m)}x^m\not=\left\{
\begin{array}{ll}
O(x^{m-1+\alpha})&i\!f \ m\ge3,\\
O(x^{1+\alpha}\log x)&i\!f \ m=2,
\end{array}
\right.\]
since we can calculate that $c=\pi/4$ and $N(r)-\pi r=O(r^{1/3})$ is well known result about the Gauss Circle Problem. We apply this result to estimate of $V_m(x,K)$, then we get \[V_m(x,K)-\frac{c^m}{\zeta_K(m)}x^m\not=\left\{%修正済み
\begin{array}{ll}
O(x^{m-1+\alpha}) &i\!f \ m\ge3,\\
O(x^{1+\alpha}\log x) &i\!f \ m=2,
\end{array}
\right.\]
we have a contradiction. Hence we show \[N(r)=\pi r+O(r^{\alpha}).\]
This proves Theorem 5.1. 
\end{proof}
This theorem means that the better order of $E_m(x,\mathbf{Q}(\sqrt{-1}))$ we get, the better order of $N(r)-\pi r$ we get. In 1915, E. Landau and G. H. Hardy proved that \[N(r)-\pi r\not=O(r^{1/4})\] independently \cite{La15} and \cite{Ha15}. 
And the best upper order in now is $0.3149\ldots$ proved by M. N. Huxley \cite{Hu00}, so we know the exact order of $E_m(x,\mathbf{Q}(\sqrt{-1}))$ is less than $0.3149\ldots$ and greater than $1/4$.

\section{Appendix}
In this section we consider some further result about relatively $s$-prime lattice point. They may be a generalization of our results, but we used only visible lattice points to consider some relations with two hypotheses in this paper. Not to mention, there are some relation between relatively $s$-prime lattice point and two hypotheses.

We say that a lattice point $(\mathfrak{a}_1, \mathfrak{a}_2,\ldots,\mathfrak{a}_m)$ is relatively $s$-prime, if there exists no prime ideal $\mathfrak{p}$ such that $\mathfrak{a}_1, \mathfrak{a}_2,\ldots,\mathfrak{a}_m\subset \mathfrak{p}^s$. If $s=1$ then relatively $1$-prime lattice point is visible lattice point from the origin. Follow the definition of $V_m(x,K)$ and $E_m(x,K)$, let $V_m^s(x,K)$ denote the number of relatively $s$-prime lattice point $(\mathfrak{a}_1, \mathfrak{a}_2,\ldots,\mathfrak{a}_m)$ with $\mathfrak{Na_i}\le x$ for all $i=1,\ldots,m$.  And we let $E_m^s(x,K)$ denote its error term, i.e. $E_m^s(x,K)=V_m^s(x,K)-(cx)^m/\zeta_K(ms)$. 

A relation with the Extended Lindel\"{o}f Hypothesis as follows.
\begin{thm}
If we assume the Extended Lindel\"{o}f Hypothesis, we get \[E_m^s(x,K)=\left\{
\begin{array}{ll}
O(x^{3/4+\varepsilon})&\text{ if } m=1 \text{ and } s=2,\\
O(x^{m-1/2+\varepsilon})&\text{ otherwise } ,
\end{array}
\right.\]
for all algebraic number field $K$ and for all $\varepsilon>0$.
\end{thm}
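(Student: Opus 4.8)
The plan is to repeat, with $\mathfrak{N}\mathfrak{d}$ replaced by $\mathfrak{N}\mathfrak{d}^{s}$, the chain of estimates leading to Theorem \ref{main}. First I would record the Inclusion--Exclusion identity
\[
V_m^s(x,K)=\sum_{\mathfrak{N}\mathfrak{d}^{s}\le x}\mu(\mathfrak{d})\,j_K\!\left(\frac{x}{\mathfrak{N}\mathfrak{d}^{s}}\right)^{m},
\]
the sum being over squarefree ideals $\mathfrak{d}$: a point $(\mathfrak{a}_1,\dots,\mathfrak{a}_m)$ fails to be relatively $s$-prime exactly when $\mathfrak{p}^{s}\mid\mathfrak{a}_i$ for every $i$ and some prime $\mathfrak{p}$, and for a finite set $S$ of distinct primes the simultaneous divisibilities $\mathfrak{p}^{s}\mid\mathfrak{a}_i$ ($\mathfrak{p}\in S$, all $i$) are equivalent to $\big(\prod_{\mathfrak{p}\in S}\mathfrak{p}\big)^{s}\mid\mathfrak{a}_i$ (all $i$). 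Since every argument $x/\mathfrak{N}\mathfrak{d}^{s}$ occurring here is $\ge 1$, Theorem \ref{number} applies term by term, and the binomial theorem gives $j_K(y)^{m}=(cy)^{m}+O(y^{m-1/2+\varepsilon})$.

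Substituting this and peeling off the main term with the identity $\sum_{\mathfrak{d}}\mu(\mathfrak{d})/\mathfrak{N}\mathfrak{d}^{ms}=1/\zeta_K(ms)$ --- valid whenever $ms>1$, the sole exception being the trivial pair $m=s=1$, where $V_1^1(x,K)=1$ and the claim is immediate --- one is left with
\[
V_m^s(x,K)=\frac{(cx)^{m}}{\zeta_K(ms)}+O\!\left(x^{m}\sum_{\mathfrak{N}\mathfrak{d}>x^{1/s}}\frac{1}{\mathfrak{N}\mathfrak{d}^{ms}}\right)+O\!\left(x^{m-1/2+\varepsilon}\sum_{\mathfrak{N}\mathfrak{d}\le x^{1/s}}\frac{1}{\mathfrak{N}\mathfrak{d}^{s(m-1/2+\varepsilon)}}\right).
\]
I would bound the two sums exactly as in the proof of Lemma \ref{lem}: from Theorem \ref{number} one has the crude density bound $j_K(n)-j_K(n-1)=O(n^{1/2+\varepsilon})$, and comparing each sum with an integral gives $O(x^{3/(2s)+\varepsilon})$ for the tail sum (using $ms>3/2$, automatic once $ms\ge 2$) and an $O(1)$ bound for the second sum whenever $s(m-1/2)>3/2$, degrading to a power of $\log x$ when $s(m-1/2)=3/2$ and to $O(x^{1/4+\varepsilon})$ when $m=1$, $s=2$ (so that the second error term is $O(x^{3/4+\varepsilon})$ there).

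It then remains to collect the cases. For $m\ge 2$ one has $3/(2s)\le 3/2\le m-1/2$ and $s(m-1/2)\ge 3/2$; for $m=1$, $s\ge 3$ one has $3/(2s)\le 1/2=m-1/2$ and $s(m-1/2)\ge 3/2$; in all these cases both error terms fold into $O(x^{m-1/2+\varepsilon})$ (absorbing the logarithm that shows up at $m=2$, $s=1$). The remaining pair $m=1$, $s=2$ produces $O(x^{3/4+\varepsilon})$ from both error sums, which is exactly the claimed bound. The step I expect to be the main obstacle is precisely this case bookkeeping: since the tail is estimated through the deliberately lossy bound $j_K(n)-j_K(n-1)=O(n^{1/2+\varepsilon})$ (kept for uniformity with Lemma \ref{lem}), $m=1$, $s=2$ is the unique pair for which the exponent $3/(2s)$ overshoots $m-1/2$, and one must check with care that no other $(m,s)$ --- in particular the borderline pairs $m=1,s=3$ and $m=2,s=1$ --- pushes either exponent past $m-1/2+\varepsilon$.
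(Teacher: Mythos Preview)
Your proposal is correct and follows exactly the approach the paper sketches: the paper only records the identity $V_m^s(x,K)=\sum_{\mathfrak{N}\mathfrak{a}\le x^{1/s}}\mu(\mathfrak{a})\,j_K(x/\mathfrak{N}\mathfrak{a}^{s})^{m}$ and says the rest goes through as in Lemma \ref{lem} and Theorem \ref{main}, and you have carried out precisely that adaptation, including the case bookkeeping that isolates $m=1$, $s=2$. One minor quibble: the aside about $m=s=1$ being ``immediate'' is off, since $E_1^1$ is not defined (the main term involves $\zeta_K(1)$); the theorem is implicitly stated for $ms\ge 2$, so you can simply drop that case.
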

And a relation with the Gauss Circle Problem as follows. 
\begin{thm}When $K=\mathbf{Q}(\sqrt{-1})$ and $1/4<\alpha<1/3$,
\[N(r)=\pi r+O(r^{\alpha})\] is equivalent to for all $ms\ge2$ \[E_m^s(x,K)=\left\{
\begin{array}{ll}
O(x^{m-1+\alpha})&\text{ if } m\ge3, \text{ or } m=2 \text{ and } s\ge2,\\
O(x^{1+\alpha}\log x)&\text{ if } m=2 \text{ and } s=1,\\
O(x^{(1+\alpha)/r})&\text{ if } m=1 \text{ and } s=2,3,4,\\
O(x^{\alpha})&\text{ if } m=1 \text{ and } s\ge5.
\end{array}
\right.\]
\end{thm}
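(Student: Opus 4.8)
The plan is to follow the pattern of the proofs of Lemma \ref{lem}, Theorem \ref{main} and Theorem \ref{circle}, adapting the inclusion--exclusion bookkeeping to the $s$-prime setting and tracking the several regimes in which the error sums change shape. The starting point is a combinatorial identity: a lattice point $(\mathfrak{a}_1,\dots,\mathfrak{a}_m)$ is relatively $s$-prime exactly when its gcd ideal is $s$-th power free, so writing $\mathbf{1}_{s\text{-free}}(\mathfrak{g})=\sum_{\mathfrak{d}^s\mid\mathfrak{g}}\mu(\mathfrak{d})$ and counting, for each $\mathfrak{d}$, the $m$-tuples all divisible by $\mathfrak{d}^s$, one obtains
\[V_m^s(x,K)=\sum_{\mathfrak{N}\mathfrak{d}^s\le x}\mu(\mathfrak{d})\,j_K\!\left(\tfrac{x}{\mathfrak{N}\mathfrak{d}^s}\right)^{\!m}.\]
For $K=\mathbf{Q}(\sqrt{-1})$ the constant is $c=\pi/4$ and $N(r)=4j_K(r)$, so the hypothesis $N(r)=\pi r+O(r^\alpha)$ is literally $j_K(r)=cr+O(r^\alpha)$.

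For the implication ``$N$-bound $\Rightarrow$ $E_m^s$-bound'' I would substitute $j_K(y)=cy+O(y^\alpha)$ into the identity, expand the $m$-th power by the binomial theorem (the dominant error in $(cy+O(y^\alpha))^m$ is $O(y^{m-1+\alpha})$ since $\alpha<1$), and complete the Dirichlet series $\sum_{\mathfrak{d}}\mu(\mathfrak{d})\mathfrak{N}\mathfrak{d}^{-ms}=1/\zeta_K(ms)$ to extract the main term $c^m x^m/\zeta_K(ms)$. The resulting error is the completed-series tail $c^m x^m\sum_{\mathfrak{N}\mathfrak{d}^s>x}\mu(\mathfrak{d})\mathfrak{N}\mathfrak{d}^{-ms}=O(x^{1/s})$ plus $\sum_{\mathfrak{N}\mathfrak{d}^s\le x}(x/\mathfrak{N}\mathfrak{d}^s)^{m-1+\alpha}$, which I would estimate by partial summation using $j_K(y)-j_K(y-1)=O(y^\alpha)$ exactly as in Lemma \ref{lem}. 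The controlling exponent is $(m-1+\alpha)s$, and it crosses $1$ precisely at the borderline case $m=2,\ s=1$ (producing the $\log x$) and, for $m=1$, at the division $s\le 1/\alpha$ versus $s>1/\alpha$, which with $1/4<\alpha<1/3$ separates $s=2,3,4$ (giving an $x^{(1+\alpha)/s}$-type bound after collecting the $x^{1/s}$ tail) from $s\ge 5$ (giving the $x^\alpha$-type bound). Collecting the dominant contribution in each regime yields the four stated bounds.

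For the converse I would assume the $E_m^s$-bound and suppose toward a contradiction that $N(r)-\pi r\ne O(r^\alpha)$, i.e. $\Delta(x):=j_K(x)-cx\ne O(x^\alpha)$. Isolating the term $\mathfrak{d}=\mathcal{O}_K$ in the identity and completing the Dirichlet series of the remaining sum (all of whose arguments $x/\mathfrak{N}\mathfrak{d}^s$ are $\le x/2^s$) gives $E_m^s(x,K)=\bigl(j_K(x)^m-c^m x^m\bigr)+(\text{off-diagonal error})+O(x^{1/s})$ for $m\ge2$, and $E_m^s(x,K)=\Delta(x)+(\text{off-diagonal error})+O(x^{1/s})$ for $m=1$; the off-diagonal error is bounded by feeding in the \emph{unconditional} Gauss-circle estimate $N(r)-\pi r=O(r^{1/3})$ (or Huxley's sharper exponent), which for $m\ge2$ makes it of size $O(x^{m-1+1/3})$ after completing its series. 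Since $j_K(x)^m-c^m x^m=mc^{m-1}x^{m-1}\Delta(x)+O(x^{m-4/3})$, the assumed bound on $E_m^s$ then pins $\Delta(x)$ to the claimed order, and reading this off along a sequence on which $\Delta$ is large produces a contradiction; for $m=1$ the obstruction is carried directly by $\Delta(x)$. One then records the same argument with $E_m^s$ replaced by $V_m^s$, the two differing only by the exact main term.

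The hard part is this converse: everything hinges on showing that the single diagonal term ($j_K(x)^m$, or $\Delta(x)$ when $m=1$) genuinely dominates, i.e. that the off-diagonal error is of strictly smaller order than the target $x^{m-1+\alpha}$ (respectively $x^{(1+\alpha)/s}$ or $x^\alpha$). This is precisely where the restriction $1/4<\alpha<1/3$ is forced: one needs the best available unconditional Gauss-circle exponent to lie below $\alpha$ so that the off-diagonal terms can be absorbed, while $\alpha>1/4$ keeps the statement compatible with the Hardy--Landau $\Omega$-result. It is also where the borderline regimes ($m=2,\ s=1$ and the $m=1$ transitions) must be handled separately, since there the relevant Dirichlet series only just converges and contributes the logarithmic factors; getting these boundary bookkeeping steps to line up with the four cases in the statement is the most delicate point of the proof.
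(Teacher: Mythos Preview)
Your forward direction is exactly the paper's approach: the paper's entire argument for this theorem is the identity
\[
V_m^s(x,K)=\sum_{\mathfrak{N}\mathfrak{a}\le x^{1/s}}\mu(\mathfrak{a})\,j_K\!\left(\frac{x}{\mathfrak{N}\mathfrak{a}^s}\right)^{m}
\]
together with the instruction to estimate the sum ``in a way similar to the proof of Lemma~\ref{lem}''. Your binomial expansion, completion of $\sum_{\mathfrak d}\mu(\mathfrak d)\mathfrak{N}\mathfrak d^{-ms}=1/\zeta_K(ms)$, and case-split on the exponent $s(m-1+\alpha)$ is precisely that program, and your regime analysis (the $\log x$ at $m=2,s=1$; the $s\le4$ versus $s\ge5$ transition for $m=1$) supplies more detail than the paper does.

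There is, however, a genuine gap in your converse, and you nearly flag it yourself. After isolating $\mathfrak d=\mathcal O_K$ you bound the off-diagonal by the unconditional circle exponent $\theta$ (either $1/3$ or Huxley's $0.3149\ldots$), getting an off-diagonal of size $O(x^{m-1+\theta})$, and then write that ``one needs the best available unconditional Gauss-circle exponent to lie below $\alpha$''. But the hypothesis is $\alpha<1/3\le\theta$, so this is exactly what \emph{fails}: the off-diagonal term dominates the target $O(x^{m-1+\alpha})$, and from $E_m^s=O(x^{m-1+\alpha})$ you can only recover $\Delta(x)=O(x^{\theta})$, which is the unconditional input. The paper's converse (Theorem~\ref{circle}, to which the appendix defers) is equally brief at this step, so you have matched its outline, but as written the argument does not close. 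A clean repair is to invert rather than isolate: from
\[
j_K(x)^m=\sum_{\mathfrak{N}\mathfrak d^s\le x}V_m^s\!\left(\frac{x}{\mathfrak{N}\mathfrak d^s},K\right)
\]
(every $m$-tuple factors uniquely through the maximal $\mathfrak d$ with $\mathfrak d^s\mid\gcd$), substitute the assumed $E_m^s$-bound and the elementary tail $\sum_{\mathfrak{N}\mathfrak d>x^{1/s}}\mathfrak{N}\mathfrak d^{-ms}=O(x^{(1-ms)/s+\varepsilon})$ to obtain $j_K(x)^m-c^mx^m=O(x^{m-1+\alpha})$ with no appeal to any unconditional circle exponent; factoring the left side then gives $\Delta(x)=O(x^{\alpha})$. (Any single case with $m\ge3$, or $m=1$ and $s\ge5$, already suffices for this direction.)
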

These theorems can be shown to change the proof of theorem about visible lattice points slightly. In a way similar to the proof of Theorem \ref{main}, we get a following estimate of $V_m^s(x,K)$.
\[V_m^s(x,K)=\sum_{\mathfrak{Na}\le \sqrt[s]{x}}\mu(\mathfrak{a})j_K\left(\frac x{\mathfrak{N}\mathfrak{a}^s}\right)^m.\]
All we have to do is considering the order of sum, but we can estimate above sum in a way similar to the proof of Lemma \ref{lem}. Therefore we leave out the last part of proof in this paper.

\end{document}